\def\G{{\sf{G}}}
\def\S{{\sf{S}}}
\def\B{\mathbb B}
\def\L{\mathbb L}
\def\N{\mathbb N}
\def\R{\mathbb R}
\def\Z{\mathbb Z}
\def\*{\times}
\def\nix{\varnothing}
\def\al{\alpha}
\def\be{\beta}
\def\ga{\gamma}
\def\Ga{\Gamma}
\def\La{\Lambda}
\def\om{\omega}
\def\Om{\Omega}
\def\mc{\mathcal}
\def\la{\langle}
\def\ra{\rangle}
\def\sm{\setminus}
\def\oti{\hfill $\square$}
\newcommand{\Int}[1]{\mathaccent 23{#1}}
\newtheorem{thm}{Theorem}[section]
\newtheorem{ques}[thm]{Question}
\newtheorem{exam}[thm]{Example}
\newtheorem{lem}[thm]{Lemma}
\newtheorem{prop}[thm]{Proposition}
\newtheorem{add}[thm]{Addendum}
\begin{document}

\title{Selections, games and metrisability of manifolds}
\author{David Gauld}
\address{Department of Mathematics, The University of
  Auckland, Private Bag 92019, Auckland, New Zealand}
  \email{d.gauld@auckland.ac.nz}

\date{\today}

\begin{abstract}
In this note we relate some selection principles to metrisability and separability of a manifold. In particular we show that $\S_{fin}(\mc K,\mc O)$, $\S_{fin}(\Om,\Om)$ and $\S_{fin}(\La,\La)$ are each equivalent to metrisability for a manifold, while $\S_1(\mc D,\mc D)$ is equivalent to separability for a manifold.
\end{abstract}
\maketitle

 {\small 2010 \textit{Mathematics Subject Classification}:
 54C65; 54D20; 54E35; 57N15; 91A44}

  \medskip
  {\small \textit{Keywords and Phrases}: selection principles; topological games; characterising metrisability of manifolds; characterising separability of manifolds; $\om$-cover; k-cover; large cover; $\tau$-cover; $\ga$-cover; $\ga_k$-cover.}

  \section{Introduction}\label{introduction}

For families $\mc A$ and $\mc B$ of subsets of some set consider the \emph{selection principles} (cf \cite{S96} and \cite{CDKM}):
\begin{itemize}
\item $\S_1(\mc A,\mc B)$: for each sequence $\la A_n\ra$ of members of $\mc A$ there is a sequence $\la b_n\ra$ such that 
$b_n\in A_n$ for each $n$ and $\{ b_n\ /\ n\in\om\}\in\mc B$;
\item $\S_{fin}(\mc A,\mc B)$: for each sequence $\la A_n\ra$ of members of $\mc A$ there is a sequence $\la B_n\ra$ of finite 
sets such that $B_n\subset A_n$ for each $n$ and $\bigcup_{n\in\om}B_n\in\mc B$.
\end{itemize}
Notice that $\S_1(\mc A,\mc B)\Rightarrow\S_{fin}(\mc A,\mc B)$. We also have $\S_1(\mc A,\mc B)\Rightarrow\S_1(\mc C,\mc 
D)$ and $\S_{fin}(\mc A,\mc B)\Rightarrow\S_{fin}(\mc C,\mc D)$ whenever $\mc C\subset\mc A$ and $\mc B\subset\mc D$. 

For a topological space $X$ consider the following examples of families $\mc A$ or $\mc B$:
\begin{itemize}
\item $\mc O$, the family of open covers of $X$;
\item $\La$, the family of \emph{large} covers of $X$, ie those open covers $\mc U$ for which $X\notin\mc U$ and every point of 
$X$ is contained in infinitely many members of $\mc U$;
\item $\Omega$, the family of \emph{$\om$-covers }of $X$, ie those open covers $\mc U$ for which $X\notin\mc U$ and every 
finite subset of $X$ is contained in some member of $\mc U$;
\item $\mc K$, the family of \emph{$k$-covers} of $X$, ie those open covers $\mc U$ for which $X\notin\mc U$ and every 
compact subset of $X$ is contained in some member of $\mc U$.
\end{itemize}

Since $\mc K\subset\Omega\subset\La\subset\mc O$, we have the implications shown by arrows in Figures \ref{Connections 
between singleton selection properties} and \ref{Connections between finite selection properties}.

\noindent{\bf Important Note.} It is common (see, for example, \cite{JMSS}, \cite{T03} and \cite{DKM} but contrast with \cite{S96}, 
\cite{CDKM} and \cite{BPS}) for authors to assume their covers are all countably infinite. Since we are seeking to characterise 
metrisability of manifolds in terms of selection principles and (see Theorem \ref{basic metrisability}) a manifold is metrisable if and 
only if it is Lindel\"of, we do not restrict our attention to countable covers. 

\begin{figure}[h]
\begin{center}
\begin{picture}(380,170)(0,0)

\put(0,150){\small $\S_{1}(\mc O,\mc K)$}
\put(90,150){\small $\S_{1}(\mc O,\Omega)$}
\put(180,150){\small $\S_{1}(\mc O,\La)$}
\put(270,150){\small $\S_{1}(\mc O,\mc O)$}

\put(0,100){\small $\S_{1}(\La,\mc K)$}
\put(90,100){\small $\S_{1}(\La,\Omega)$}
\put(180,100){\small $\S_{1}(\La,\La)$}
\put(270,100){\small $\S_{1}(\La,\mc O)$}

\put(0,50){\small $\S_{1}(\Om,\mc K)$}
\put(90,50){\small $\S_{1}(\Om,\Omega)$}
\put(180,50){\small $\S_{1}(\Om,\La)$}
\put(270,50){\small $\S_{1}(\Om,\mc O)$}

\put(0,0){\small $\S_{1}(\mc K,\mc K)$}
\put(90,0){\small $\S_{1}(\mc K,\Omega)$}
\put(180,0){\small $\S_{1}(\mc K,\La)$}
\put(270,0){\small $\S_{1}(\mc K,\mc O)$}

\multiput(20,46)(90,0){4}{\multiput(0,0)(0,50){3}{\put(0,0){\vector(0,-1){38}}}}
\multiput(50,3)(90,0){3}{\multiput(0,0)(0,50){4}{\put(0,0){\vector(1,0){38}}}}

\multiput(18,3)(90,0){4}{\put(0,0){\oval(42,12)}}

\put(340,25){\bf For a manifold}
\put(350,10){equivalent to}
\put(350,0){metrisability}
\put(380,8){\oval(68,22)}
\end{picture}

\caption{\label{Connections between singleton selection properties} Connections between singleton selection properties.}

\end{center}
\end{figure}
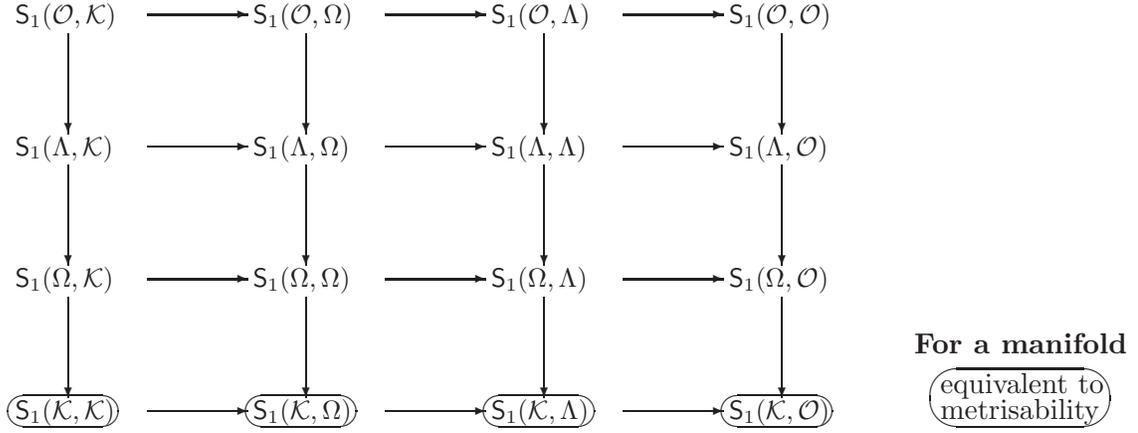

\begin{figure}[h]
\begin{center}
\begin{picture}(380,170)(0,0)

\put(0,150){\small $\S_{fin}(\mc O,\mc K)$}
\put(90,150){\small $\S_{fin}(\mc O,\Omega)$}
\put(180,150){\small $\S_{fin}(\mc O,\La)$}
\put(270,150){\small $\S_{fin}(\mc O,\mc O)$}

\put(0,100){\small $\S_{fin}(\La,\mc K)$}
\put(90,100){\small $\S_{fin}(\La,\Omega)$}
\put(180,100){\small $\S_{fin}(\La,\La)$}
\put(270,100){\small $\S_{fin}(\La,\mc O)$}

\put(0,50){\small $\S_{fin}(\Om,\mc K)$}
\put(90,50){\small $\S_{fin}(\Om,\Omega)$}
\put(180,50){\small $\S_{fin}(\Om,\La)$}
\put(270,50){\small $\S_{fin}(\Om,\mc O)$}

\put(0,0){\small $\S_{fin}(\mc K,\mc K)$}
\put(90,0){\small $\S_{fin}(\mc K,\Omega)$}
\put(180,0){\small $\S_{fin}(\mc K,\La)$}
\put(270,0){\small $\S_{fin}(\mc K,\mc O)$}

\multiput(20,46)(90,0){4}{\multiput(0,0)(0,50){3}{\put(0,0){\vector(0,-1){38}}}}
\multiput(50,3)(90,0){3}{\multiput(0,0)(0,50){4}{\put(0,0){\vector(1,0){38}}}}

\multiput(23,3)(90,0){4}{\put(0,0){\oval(50,12)}}
\multiput(113,53)(90,0){3}{\put(0,0){\oval(50,12)}}
\multiput(203,103)(90,0){2}{\put(0,0){\oval(50,12)}}
\put(293,153){\oval(50,12)}

\put(340,25){\bf For a manifold}
\put(350,10){equivalent to}
\put(350,0){metrisability}
\put(380,8){\oval(68,22)}
\end{picture}
 \caption{\label{Connections between finite selection properties} Connections between finite selection properties.}

\end{center}
\end{figure}
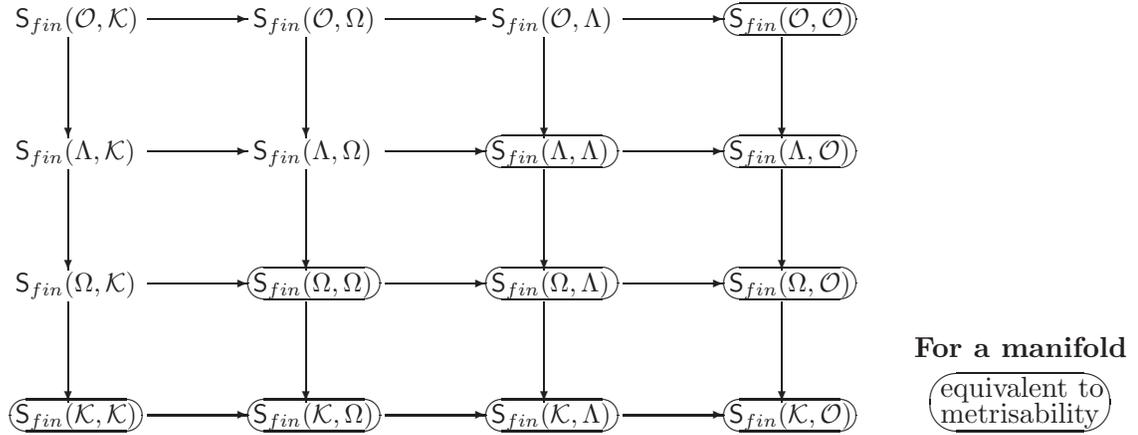

We can associate with $\S_{fin}(\mc A,\mc B)$ a \emph{topological game} $\G_{fin}(\mc A,\mc B)$ for two players ONE and TWO. For each $n\in\om$ player ONE chooses $A_n\in\mc A$ then player TWO chooses a finite set $B_n\subset A_n$. Player TWO wins if $\bigcup B_n\in\mc B$; otherwise player ONE wins. There is a similar game, $\G_1(\mc A,\mc B)$, related to $\S_1(\mc A,\mc B)$.

In this paper, by a \emph{manifold} we mean a non-empty, connected, Hausdorff space each point of which has a neighbourhood homeomorphic to $\R^n$ for some natural number $n$. The assumption of connectedness is there because we are investigating primarily whether a manifold is metrisable and if necessary we may look at each component individually. We have the following characterisations of metrisability for a manifold (cf \cite[Theorem 2]{G}).

\begin{thm}\label{basic metrisability}
Let $M$ be a manifold. Then the following are equivalent.
\begin{itemize}
\item[(a)] $M$ is metrisable;
\item[(b)] $M$ is Lindel\"of;
\item[(c)] $M$ is $\sigma$-compact;
\item[(d)] $M$ may be embedded properly in some euclidean space $\R^m$.
\end{itemize}
\end{thm}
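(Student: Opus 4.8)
The plan is to establish the cycle $(a)\Rightarrow(b)\Rightarrow(c)\Rightarrow(d)\Rightarrow(a)$, using throughout that a manifold is locally compact, locally connected, locally second countable, and (being locally compact and Hausdorff) regular, and that by connectedness it has a well-defined dimension $n$. Two of the implications are immediate. For $(d)\Rightarrow(a)$ an embedding realises $M$ as a subspace of the metric space $\R^m$, so $M$ is metrisable; properness is irrelevant here. For $(b)\Rightarrow(c)$ I would cover $M$ by open charts with compact closure, invoke the Lindel\"of property to extract a countable subcover, and note that the union of the (compact) closures of its members is all of $M$.

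The implication $(a)\Rightarrow(b)$ is where connectedness does the real work. Since $M$ is metrisable it is paracompact, so a cover by second countable, relatively compact open charts admits a locally finite open refinement $\mc U$. Each member of $\mc U$ has compact closure and therefore meets only finitely many other members, so the graph on $\mc U$ whose edges join intersecting members is locally finite. Connectedness of $M$ forces this graph to be connected, since a splitting of its vertices into two classes with no edges between them would express $M$ as a union of two disjoint non-empty open sets; and a connected locally finite graph has only countably many vertices. Hence $\mc U$ is countable and $M$, being a countable union of second countable open sets, is second countable, and so Lindel\"of.

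The substantive step is $(c)\Rightarrow(d)$. From $\sigma$-compactness I first obtain, via the Lindel\"of property and local second countability, that $M$ is second countable, together with a compact exhaustion $K_1\subset\Int{K_2}\subset K_2\subset\cdots$ with $\bigcup_j K_j=M$. One then chooses a countable, locally finite atlas refining this exhaustion, with a subordinate partition of unity, and assembles the chart maps into a single map into a Euclidean product; injectivity and the immersion property are routine, while properness is arranged by adjoining an auxiliary function $f\colon M\to\R$ built from the exhaustion so that $f\to\infty$ away from the $K_j$. The genuine obstacle---exactly the content of the classical Whitney embedding theorem---is to make the target finite dimensional: this needs the fact that an $n$-manifold has covering dimension $n$, which (through a general-position or finite-colouring argument bounding the number of overlapping charts) lets one compress the construction to a proper embedding into $\R^{2n+1}$. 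I expect this dimension reduction, rather than any of the point-set steps, to be the crux of the whole theorem.
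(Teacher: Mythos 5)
Your overall architecture coincides with the paper's: your (a)$\Rightarrow$(b) argument via paracompactness, a locally finite refinement by relatively compact charts, and connectedness of the resulting locally finite intersection graph is precisely the content of the lemmas from Gauld's 1974 Monthly paper that the author cites for that step; your (b)$\Rightarrow$(c) and (d)$\Rightarrow$(a) are the same one-line observations; and both you and the paper reduce the remaining implication to a classical embedding theorem together with the trick of adjoining a proper real-valued function built from a compact exhaustion.

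The one genuine problem is in your (c)$\Rightarrow$(d), where you call the crux ``exactly the content of the classical Whitney embedding theorem'' and speak of injectivity and ``the immersion property'' of a map assembled from chart maps and a partition of unity. The manifolds of this paper are topological (locally Euclidean, Hausdorff, connected); there is no smooth structure in sight, so ``immersion'' has no meaning here and Whitney's theorem cannot be invoked --- indeed in dimensions at least $4$ a topological manifold may admit no smooth structure at all. The theorem that actually does the work is the dimension-theoretic (Menger--N\"obeling type) embedding theorem: a separable metrisable space of covering dimension $n$ embeds in $\R^{2n+1}$, which the paper quotes from Pears. Your parenthetical mention of covering dimension and a general-position or finite-colouring argument is the correct route, so the repair is to discard the Whitney/immersion framing and rest the step entirely on that result (noting that ``an $n$-manifold has covering dimension $n$'' is itself a nontrivial fact). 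A smaller point: you propose to arrange properness first and then ``compress'' the construction into $\R^{2n+1}$; a general-position compression need not preserve closedness of the image, and the paper sidesteps this by working in the opposite order --- first embed $M$ (not necessarily properly) in $\R^{2n+1}$ via Pears, then adjoin the proper function $f=\sum_{n}f_n$ as one additional coordinate, which automatically makes the image of the resulting embedding into $\R^{2n+2}$ closed. Since the statement only asks for \emph{some} $\R^m$, nothing is lost by settling for $2n+2$.
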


This Theorem, together with Proposition \ref{closed subspace of a space satisfying S(A,B) also satisfies S(A,B)} below which  asserts that our selection principles are weakly hereditary, suggests as a guiding principle that any selection property which holds in $\R$ is likely to be implied by metrisability for a manifold. Note that (d) may be rephrased as: $M$ may be embedded as a closed subset of some euclidean space $\R^m$.

That every metrisable manifold is separable follows from Theorem \ref{basic metrisability} but there are separable manifolds which are not metrisable. However we do have the following characterisation of separability of a manifold, cf \cite[Proposition 1]{G}. As with metrisability, this theorem suggests testing euclidean space to determine whether a particular topological property is equivalent to separability for a manifold. We visit this again in Theorem \ref{D and separability}.

\begin{thm}\label{basic separability}
An $m$-manifold is separable if and only if it has an open dense subset homeomorphic to $\R^m$.
\end{thm}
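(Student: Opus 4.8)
The plan is to prove the two implications separately; the forward implication is the substantial one.

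The reverse implication is routine. Suppose $U\subseteq M$ is open and dense with $U\cong\R^m$. Since $\R^m$ is second countable, $U$ is separable; let $D$ be a countable dense subset of $U$. As $D$ is dense in $U$ we have $U\subseteq\mathrm{cl}_M D$, whence $M=\mathrm{cl}_M U\subseteq\mathrm{cl}_M(\mathrm{cl}_M D)=\mathrm{cl}_M D$. Thus $D$ is a countable dense subset of $M$, so $M$ is separable.

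For the forward implication, fix a countable dense subset $D=\{x_n:n\in\N\}$ of $M$. I would build, by induction on $n$, a nested sequence of open sets $U_1\subseteq U_2\subseteq\cdots$ with each $U_n\cong\R^m$, each $\overline{U_n}$ compact, $\overline{U_n}\subseteq U_{n+1}$, and $x_n\in U_n$. For the base step take a coordinate ball about $x_1$ with compact closure. For the inductive step, the set $\overline{U_n}\cup\{x_{n+1}\}$ is compact; the key claim is that it lies in some open set $W\cong\R^m$. Granting this, inside the chart $W$ I would choose a large round open ball $U_{n+1}$ containing $\overline{U_n}\cup\{x_{n+1}\}$ and with compact closure; it is again homeomorphic to $\R^m$ and meets all the requirements. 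Setting $U=\bigcup_{n\in\N}U_n$, the set $U$ is open and contains the dense set $D$, hence is dense in $M$; and since $U$ is a monotone union of open $m$-cells, Brown's theorem that the monotone union of open $m$-cells is an open $m$-cell yields $U\cong\R^m$, completing the proof.

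The main obstacle is the key claim used in the inductive step: that a compact set already contained in an open $m$-cell, together with one further point, can be engulfed in a single open $m$-cell. I would prove this by exploiting the path-connectedness of the manifold: join a point of the current cell to $x_{n+1}$ by an arc, cover the (compact) arc by finitely many coordinate balls, and absorb these balls one at a time into the growing cell. The delicate point here---and the technical heart of the whole argument---is the absorption step, namely that the union of an open $m$-cell with a suitably overlapping coordinate ball may again be taken to be an open $m$-cell; this, rather than the bookkeeping of the induction, is where the real work lies.
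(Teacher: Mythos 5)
Your overall skeleton is the same as the paper's: the reverse direction via second countability of $\R^m$, and the forward direction by inductively engulfing one new point of the countable dense set at a time inside an increasing sequence of open $m$-cells with nested compact closures, finishing with Brown's monotone union theorem. The difference is in how the key engulfing claim --- that a compact set lying in an open $m$-cell, together with one further point, again lies in an open $m$-cell --- gets proved, and that is exactly where your argument has a genuine gap. Your proposed absorption lemma, that the union of an open $m$-cell with a suitably overlapping coordinate ball may again be taken to be an open $m$-cell, is never proved in your proposal, and as stated it is false: take $U=S^m\sm\{p\}$ and let $B$ be a small round coordinate ball about $p$; then $U$ is an open $m$-cell, $B$ is a coordinate ball meeting $U$ in an open connected set (even simply connected when $m\ge 3$), yet $U\cup B=S^m$ is compact and hence not a cell. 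So the phrase ``suitably overlapping'' would have to carry essentially the entire content of the claim, and nothing in your chain-of-balls construction guarantees overlaps of any particular kind; forming unions is simply the wrong operation here.

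The paper avoids unions altogether. Fixing the compactum $C_n$, it considers $S=\{x\in M\ /\ \exists \mbox{ open } U\subset M \mbox{ with } C_n\cup\{x\}\subset U\approx\R^m\}$ and proves $S$ is clopen, whence $S=M$ by connectedness. Openness is immediate, since the witness $U$ for $x$ is a neighbourhood of $x$ contained in $S$. For closedness, given $z\in\bar S\sm S$, one picks a chart $O$ containing $z$ with $O\cap C_n=\nix$, a point $x\in O\cap S$ with witness $U$, and then a homeomorphism $h:M\to M$ which is the identity outside a compact subset of $O$ and satisfies $z\in h(U)$; then $h(U)$ is again an open cell containing $z$, and it still contains $C_n=h(C_n)$ precisely because the support of $h$ misses $C_n$. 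This ``drag the cell over the new point by a compactly supported ambient homeomorphism'' move is the correct replacement for your absorption step: it is also how one would repair your arc-and-balls argument (enlarge the cell through each ball of the chain by a homeomorphism supported in that ball, rather than by union), but then one must additionally keep all supports away from the compactum being carried along --- a bookkeeping issue the paper's clopen formulation disposes of automatically, since there only the single chart $O$, chosen disjoint from $C_n$, is ever touched. As it stands, your proof is incomplete at precisely the point you yourself identified as the technical heart.
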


Our main results are as follows. Note that some of these results follow from previous results, especially \cite{CDKM}, \cite{DKM}, \cite{DKM06}, \cite{K03} and \cite{K05}.
\begin{thm}\label{metrisability in terms of singleton selections}
Let $M$ be a manifold. Then $M$ is metrisable if and only if any one of the selection principles $\S_1(\mc K,\mc O)$, $\S_1(\mc K,\La)$, $\S_1(\mc K,\Om)$ and $\S_1(\mc K,\mc K)$ holds.
\end{thm}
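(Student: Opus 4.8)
The plan is to exploit the monotonicity recorded in the introduction: since $\mc K\subset\Om\subset\La\subset\mc O$, we obtain the chain of implications
\[
\S_1(\mc K,\mc K)\Rightarrow\S_1(\mc K,\Om)\Rightarrow\S_1(\mc K,\La)\Rightarrow\S_1(\mc K,\mc O).
\]
Hence it suffices to close the loop by proving the two extreme implications: that metrisability implies the strongest principle $\S_1(\mc K,\mc K)$, and that the weakest principle $\S_1(\mc K,\mc O)$ implies metrisability. Throughout I will invoke Theorem \ref{basic metrisability} to replace ``metrisable'' by ``$\sigma$-compact'', together with the fact that a $\sigma$-compact manifold is hemicompact: being locally compact and Hausdorff it carries an exhaustion $K_1\subseteq\Int K_2\subseteq K_2\subseteq\Int K_3\subseteq\cdots$ by compact sets with $M=\bigcup_m K_m$, so that every compact subset of $M$ lies in some $K_m$.

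For the forward implication, suppose $M$ is metrisable and fix such an exhaustion $\la K_m\ra$. Given a sequence $\la\mc U_n\ra$ of $k$-covers, I will simply select, for each $n$, a member $U_n\in\mc U_n$ with $K_n\subseteq U_n$; this is possible because $\mc U_n$ is a $k$-cover and $K_n$ is compact. The family $\{U_n\ /\ n\in\om\}$ is then a $k$-cover: each $U_n\neq M$ since $M\notin\mc U_n$, and any compact $L\subseteq M$ lies in some $K_m$, hence in $U_m$. Thus $\S_1(\mc K,\mc K)$ holds.

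For the reverse implication I argue by contraposition. If $M$ is not metrisable then by Theorem \ref{basic metrisability} it is not $\sigma$-compact, and in particular not compact. The key construction is the family $\mc R$ of all open subsets of $M$ with compact closure. Local compactness makes $\mc R$ a cover; $M\notin\mc R$ because $M$ is not compact; and since a finite union of relatively compact open sets is again relatively compact, every compact set sits inside a single member of $\mc R$. Hence $\mc R$ is a $k$-cover. Now take the constant sequence $\mc U_n=\mc R$. Any selection $\la R_n\ra$ yields a countable subfamily of $\mc R$ whose union is contained in $\bigcup_n\overline{R_n}$, a $\sigma$-compact set; as $M$ is not $\sigma$-compact, $\{R_n\ /\ n\in\om\}$ fails to cover $M$ and so is not a member of $\mc O$. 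Thus no selection succeeds, $\S_1(\mc K,\mc O)$ fails, and metrisability is forced.

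The main obstacle is isolating the right witness for the reverse direction: one needs a single $k$-cover all of whose countable subfamilies fail to cover $M$, and the point is that the family of relatively compact open sets exactly encodes $\sigma$-compactness, since a countable subfamily of $\mc R$ covers $M$ precisely when $M$ is $\sigma$-compact. Once this $k$-cover is identified the constant-sequence trick makes the failure of selection immediate, and the forward direction is then routine given hemicompactness.
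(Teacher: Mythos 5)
Your proposal is correct, but it reaches the theorem by a genuinely different route in both directions. For ``selection principle implies metrisability'' the paper proves a lemma valid for arbitrary spaces: any space satisfying $\S_{fin}(\mc K,\mc O)$ (hence $\S_1(\mc K,\mc O)$) is Lindel\"of, obtained by applying the principle to the constant sequence built from the k-cover $\widehat{\mc U}$ of all finite unions of members of an arbitrary open cover $\mc U$; metrisability then follows from Theorem \ref{basic metrisability}(b). You instead argue by contraposition through $\sigma$-compactness (Theorem \ref{basic metrisability}(c)), taking as witness the single k-cover $\mc R$ of all relatively compact open sets, so that any countable selection from $\mc R$ lands inside a $\sigma$-compact subset. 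Note that this step genuinely uses local compactness (both to make $\mc R$ a cover and to get the finite-union closure argument), so unlike the paper's lemma it does not extend beyond locally compact spaces --- harmless here, since manifolds are locally compact. For the converse, the paper cites \cite[Proposition 5]{CDKM} ($\S_1(\mc K,\mc K)$ is equivalent to hemicompactness for first countable spaces) together with \cite[Corollary 2.5]{GM} (a manifold is metrisable iff hemicompact), whereas you prove the required implication directly: a compact exhaustion $K_n\subset\Int{K}_{n+1}$ of a $\sigma$-compact, locally compact Hausdorff space lets you pick $U_n\supset K_n$ from the $n$-th k-cover, and the selected family is again a k-cover. Your argument is self-contained and elementary, essentially reproving the easy half of the cited proposition; the paper's choices buy generality and reuse (its lemma serves again in Theorem \ref{metrisability in terms of finite selections}). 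Both of your directions, like the paper's, apply the selection hypothesis only to constant sequences, so your proof is also compatible with the Addendum.
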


\begin{thm}\label{metrisability in terms of finite selections}
Let $M$ be a manifold. Then $M$ is metrisable if and only if any one of the selection principles $\S_{fin}(\mc K,\mc O)$, $\S_{fin}(\mc K,\La)$, $\S_{fin}(\mc K,\Om)$, $\S_{fin}(\mc K,\mc K)$, $\S_{fin}(\Om,\mc O)$, $\S_{fin}(\Om,\La)$, $\S_{fin}(\Om,\Om)$, $\S_{fin}(\La,\mc O)$, $\S_{fin}(\La,\La)$ and $\S_{fin}(\mc O,\mc O)$ holds.
\end{thm}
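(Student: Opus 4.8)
The plan is to exploit the implication lattice of Figure \ref{Connections between finite selection properties} to reduce the ten equivalences to two tasks. Since $\S_{fin}(\mc A,\mc B)\Rightarrow\S_{fin}(\mc C,\mc D)$ whenever $\mc C\subseteq\mc A$ and $\mc B\subseteq\mc D$, the four ``diagonal'' principles $\S_{fin}(\mc K,\mc K)$, $\S_{fin}(\Om,\Om)$, $\S_{fin}(\La,\La)$ and $\S_{fin}(\mc O,\mc O)$ together imply all ten oval-marked principles, since each oval-marked principle sits below and to the right of one of the diagonals; conversely every one of the ten implies the weakest principle $\S_{fin}(\mc K,\mc O)$ (smallest source, largest target). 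Hence it suffices to prove: (I) metrisability implies each of the four diagonal principles; and (II) $\S_{fin}(\mc K,\mc O)$ implies metrisability. The arrows of the figure then fill in the rest.

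For (II) I would argue the contrapositive. If $M$ is not metrisable then by Theorem \ref{basic metrisability} it is not $\sigma$-compact; being a manifold it is locally compact Hausdorff, so $\mc U=\{U\subseteq M:U\text{ open},\ \overline U\text{ compact}\}$ is an open cover with $M\notin\mc U$, and a routine finite-subcover argument shows every compact subset of $M$ lies in a single member of $\mc U$, whence $\mc U\in\mc K$. Taking $\mc U_n=\mc U$ for every $n$, any choice of finite $B_n\subseteq\mc U_n$ produces $\bigcup_n B_n$, a countable subfamily of $\mc U$; were it a cover, $M$ would be a countable union of relatively compact sets, contradicting non-$\sigma$-compactness. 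Thus no selection yields a cover and $\S_{fin}(\mc K,\mc O)$ fails.

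For (I) I would use that a metrisable manifold is $\sigma$-compact, and being locally compact Hausdorff is therefore hemicompact: there is an increasing exhaustion by compacta $K_0\subseteq K_1\subseteq\cdots$ with $K_m\subseteq\Int{K_{m+1}}$ and every compact set contained in some $K_m$. Three of the diagonals are then routine (and the conditions ``$M\notin$'' and ``is a cover'' hold automatically, as all chosen members come from covers omitting $M$). For $\S_{fin}(\mc O,\mc O)$, compactness of $K_n$ extracts a finite $B_n\subseteq\mc U_n$ covering $K_n$, and $\bigcup_n B_n\supseteq\bigcup_n K_n=M$. For $\S_{fin}(\mc K,\mc K)$, pick a single $U_n\in\mc U_n$ with $K_n\subseteq U_n$ (possible since $\mc U_n$ is a $k$-cover); then every compact set, lying in some $K_m$, lies in $U_m$, so $\{U_n\}$ is a $k$-cover. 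For $\S_{fin}(\La,\La)$, a local-intersection-plus-compactness argument shows that from a large cover one may choose, for any prescribed $N$, a finite subfamily covering $K_n$ with multiplicity at least $N$; taking multiplicity at least $n$ at stage $n$ forces every point to lie in arbitrarily many, hence infinitely many, members of $\bigcup_n B_n$.

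The main obstacle is $\S_{fin}(\Om,\Om)$, where finite selections cannot by themselves engulf a whole compact set, so the hemicompact bookkeeping above does not apply directly. Here I would pass to finite powers: $M^r$ is $\sigma$-compact, hence satisfies $\S_{fin}(\mc O,\mc O)$ for every $r$ by the $\S_{fin}(\mc O,\mc O)$ case just proved. Given $\om$-covers $\mc U_n$, partition $\om$ into infinite blocks $I_r$; for $n\in I_r$ the family $\{U^r:U\in\mc U_n\}$ is an open cover of $M^r$, and applying the Menger property of $M^r$ to these covers yields finite $B_n\subseteq\mc U_n$ with $\{U^r:U\in\bigcup_{n\in I_r}B_n\}$ covering $M^r$. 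Then any finite $F\subseteq M$ with $|F|\le r$, read as a point of $M^r$, satisfies $F\subseteq U$ for some chosen $U$, so $\bigcup_n B_n$ is an $\om$-cover. Checking this power-covering correspondence and the block diagonalisation is the delicate point; it is in essence the finite-powers characterisation of $\S_{fin}(\Om,\Om)$ from \cite{JMSS}, which one may alternatively cite outright.
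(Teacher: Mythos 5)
Your argument is correct, and its skeleton---reduce the ten principles to the four ``diagonal'' ones $\S_{fin}(\mc K,\mc K)$, $\S_{fin}(\Om,\Om)$, $\S_{fin}(\La,\La)$, $\S_{fin}(\mc O,\mc O)$ for one direction and to the weakest principle $\S_{fin}(\mc K,\mc O)$ for the other---is exactly the paper's. The execution, however, is genuinely different. For $\S_{fin}(\mc K,\mc O)\Rightarrow$ metrisability the paper proves a lemma valid for arbitrary spaces: feeding the constant sequence whose term is the k-cover $\widehat{\mc U}=\{U_1\cup\dots\cup U_n\ /\ U_i\in\mc U\}$ into $\S_{fin}(\mc K,\mc O)$ produces a countable subcover of $\mc U$, so the space is Lindel\"of, and Theorem \ref{basic metrisability} finishes; your contrapositive, using the k-cover of open sets with compact closure and non-$\sigma$-compactness, is the same finite-union idea specialised to locally compact spaces, and is equally valid for manifolds. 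The real divergence is in the converse. The paper verifies each diagonal principle in $\R^m$ (via the compact cubes $[-n,n]^m$, and for $\S_{fin}(\Om,\Om)$ the covers $\{U^n\ /\ U\in\mc U_n\}$ of the powers $[-n,n]^{mn}$), then transfers the property to an arbitrary metrisable manifold by combining clause (d) of Theorem \ref{basic metrisability} (proper embedding in a Euclidean space) with Proposition \ref{closed subspace of a space satisfying S(A,B) also satisfies S(A,B)} (closed heredity); for $\S_{fin}(\mc K,\mc K)$ it instead invokes Theorem \ref{metrisability in terms of singleton selections}, whose proof rests on the hemicompactness results of \cite{CDKM} and \cite{GM}. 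You work directly on $M$ with a compact exhaustion: your one-line selection $K_n\subset U_n$ re-proves the relevant direction of \cite{CDKM} (hemicompact implies $\S_1(\mc K,\mc K)$); your multiplicity-at-least-$n$ selection for $\S_{fin}(\La,\La)$ replaces the paper's device of forcing the finite families $\mc V_n$ to be pairwise disjoint (both work); and your block-diagonalised appeal to the Menger property of every finite power $M^r$ is the paper's powers trick run on the manifold itself, i.e.\ the \cite{JMSS} characterisation you name. Your route buys self-containedness: no embedding theorem, no closed-heredity proposition, no external citations. The paper's route buys reusability and concreteness: Proposition \ref{closed subspace of a space satisfying S(A,B) also satisfies S(A,B)} is established once and reused, and all verifications happen inside $\R^m$, in line with the paper's stated guiding principle that a selection property holding in $\R$ should propagate to all metrisable manifolds through closed embeddings.
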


\section{Characterising metrisability for manifolds.}

In this section we give proofs of the theorems characterising metrisability or separability of manifolds given above.

\noindent{\bf Proof of Theorem \ref{basic metrisability}}

It is well-known that every metrisable space is paracompact. Every paracompact, locally compact, connected space is Lindel\"of (see \cite[Lemmas 5 and 6]{G74}, for example), so metrisable manifolds are Lindel\"of.

Locally compact Lindel\"of spaces are $\sigma$-compact and $\sigma$-compact spaces are Lindel\"of. Also Lindel\"of, locally second countable spaces are second countable, hence by Urysohn's Metrisation Theorem every Lindel\"of manifold is metrisable. (Incidentally we have also shown that second countability is equivalent to metrisability.)

Thus it remains to show that a metrisable manifold embeds properly in some Euclidean space.

If $M$ is metrisable and has dimension $n$ then $M$ is second countable, hence separable, and has covering dimension $n$ so by \cite[Proposition 7.3.9]{P} it embeds in $\R^{2n+1}$. It remains to exhibit a proper continuous function $M\to\R$ because we can use it to add a further coordinate to embed $M$ in $\R^{2n+2}$ to make the image closed, ie the embedding proper. As already noted, $M$ is $\sigma$-compact. Moreover we can find a sequence $\la K_n\ra$ of compacta in $M$ such that $K_n\subset\Int{K}_{n+1}$ and $M=\cup_{n=1}^\infty K_n$. For each $n$ use a compatible metric to find a continuous function $f_n:M\to[0,1]$ such that $f_n(K_n)=0$ and $f_n(X\sm K_{n+1})=1$. Each $x\in X$ has a neighbourhood on which only finitely many of the functions $f_n$ is non-zero (for example if $x\in K_n$ then $\Int{K}_{n+1}$ is such a neighbourhood). Thus the function $f=\sum_{n=1}^\infty f_n:M\to\R$ is well-defined and continuous. Furthermore $f$ is proper. \oti

\noindent{\bf Proof of Theorem \ref{basic separability}}

Suppose that $M$ is an $m$-manifold. Clearly if $M$ contains a dense subset homeomorphic to $\R^m$ then a countable dense subset of that subset is dense in $M$ so $M$ is separable.

Now suppose that $M$ is separable, say $D$ is a countable dense subset. Suppose $\la D_n\ra$ is such that $D=\cup_{n\ge 1}D_n$, $|D_n|=n$ and $D_n\subset D_{n+1}$.

By induction on $n$ we choose open $V_n\subset M$ and compact $C_n\subset M$ such that
\begin{center}
(i) $D_n\cup C_{n-1}\subset \Int C_n$ and (ii) $(V_n,C_n)\approx (\R^m,\B^m)$,
\end{center}
where $C_0=\nix$ and $\B^m$ denotes the closed unit ball in $\R^m$.

For $n=1$, $D_1$ is a singleton so $V_1$ may be any appropriate neighbourhood of that point while $C_1$ is a compact neighbourhood chosen to satisfy (ii) as well.

Suppose that $V_n$ and $C_n$ have been constructed. Consider
\[ S=\{ x\in M\ /\ \exists \mbox{ open } U\subset M \mbox{ with } C_n\cup\{x\}\subset U\approx\R^m\}.\]

$S$ is open. $S$ is also closed, for suppose that $z\in\bar S-S$. Then we may choose open $O\subset M$ with $O\approx\R^m$ and $z\in O$. Choose $x\in O\cap S$, so there is open $U\subset M$ with $C_n\cup\{ x\}\subset U\approx\R^m$. We may assume that $O$ is small enough that $O\cap C_n=\nix$. Using the euclidean space structure of $O$ we may find a homeomorphism $h:M\to M$ which is the identity except in a compact subset of $O$ so that $z\in h(U)$. Since also $C_n=h(C_n)\subset h(U)$ it follows that $z\in S$.

As $M$ is connected and $S\not=\nix$ we must have $S=M$. Thus there is open $V_{n+1}\subset M$ with $D_{n+1}\cup C_n=(D_{n+1}\sm D_n)\cup C_n\subset V_{n+1}\approx\R^m$. Because $D_{n+1}\cup C_n$ is compact we may find in
$V_{n+1}$ a compact subset $C_{n+1}$ so that (i) and (ii) hold with $n$ replaced by $n+1$.

Let $U_n=\Int C_n$. Then $U_n$ is open, $U_n\subset U_{n+1}$ and $U_n\approx\R^m$. Thus by \cite{B61}, $U=\cup_{n\ge 1}U_n$ is also open with $U\approx\R^m$. Furthermore $D_n\subset U_n$ for each $n$ so that $D\subset U$. \oti

\begin{lem}\label{S{fin}(K,O) then it is Lindelof}
If a space satisfies $\S_{fin}(\mc K,\mc O)$ then it is Lindel\"of.
\end{lem}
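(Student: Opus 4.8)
The plan is to show directly that every open cover of the space $X$ has a countable subcover, invoking $\S_{fin}(\mc K,\mc O)$ exactly once, on a constant sequence. So let $\mc V$ be an arbitrary open cover of $X$. First I would dispose of the trivial case: if some finite subfamily of $\mc V$ already covers $X$, that finite (hence countable) subfamily is the desired subcover and we are done. Thus I may assume from now on that \emph{no} finite subfamily of $\mc V$ covers $X$; in particular $X\notin\mc V$.

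The key construction is to pass from $\mc V$ to the family $\mc W$ of all finite unions $V_1\cup\cdots\cup V_k$ with $k\in\N$ and each $V_i\in\mc V$, and to check that $\mc W$ is a $k$-cover. Indeed, each member of $\mc W$ is open; since $\mc V\subset\mc W$ the family $\mc W$ covers $X$; by the standing assumption no finite union of members of $\mc V$ equals $X$, so $X\notin\mc W$; and if $K\subset X$ is compact then finitely many members of $\mc V$ cover $K$, whence $K$ is contained in their union, which is a single member of $\mc W$. Hence $\mc W\in\mc K$. Now I would apply $\S_{fin}(\mc K,\mc O)$ to the constant sequence $A_n=\mc W$ for all $n\in\om$: this yields finite sets $B_n\subset\mc W$ such that $\bigcup_{n\in\om}B_n$ is an open cover of $X$.

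Finally, $\bigcup_{n\in\om}B_n$ is a countable family (a countable union of finite sets), each of whose members is a finite union of elements of $\mc V$; collecting together all the elements of $\mc V$ occurring in these unions produces a countable subfamily $\mc V'\subset\mc V$, and since the finite unions already cover $X$ so does $\mc V'$. Thus $\mc V$ has a countable subcover and $X$ is Lindel\"of. I expect the only real obstacle to be the construction of the $k$-cover: one must notice that an arbitrary open cover is not itself a $k$-cover, so $\mc V$ cannot be fed to the selection principle directly, and that replacing $\mc V$ by its finite unions both repairs this and is harmless, since the passage can be undone at the end. The reduction ruling out a finite subcover is exactly what guarantees $X\notin\mc W$, so that $\mc W$ qualifies as a $k$-cover.
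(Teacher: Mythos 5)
Your proof is correct and follows essentially the same route as the paper's: replace the given cover by the family of its finite unions, observe that this family is a $k$-cover, apply $\S_{fin}(\mc K,\mc O)$ to the constant sequence, and then unravel the finite unions to extract a countable subcover. In fact you are slightly more careful than the paper, which asserts $\widehat{\mc U}\in\mc K$ without checking the requirement $X\notin\widehat{\mc U}$; your preliminary reduction to the case where no finite subfamily covers $X$ is exactly what justifies that point.
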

\begin{proof} Suppose that $\mc U$ is an open cover of the space $X$ which satisfies $\S_{fin}(\mc K,\mc O)$. Apply $\S_{fin}(\mc K,\mc O)$ to the sequence $\la\widehat{\mc U}\ra$, where $\widehat{\mc U}=\{U_1\cup\dots\cup U_n\ /\ U_i\in\mc U \mbox{ for each $i$, and } n\in\N\}$, noting that $\widehat{\mc U}\in\mc K$. By $\S_{fin}(\mc K,\mc O)$, for each $n\in\N$ there is a finite $\mc V_n\subset\widehat{\mc U}$ such that $\cup_{n\in\N}\mc V_n$ covers $X$. Then $\cup_{n\in\N}\mc V_n$ is a countable family each member of which is a finite union of members of $\mc U$, so gives rise to a countable subcover of $\mc U$. \end
{proof}

\noindent{\bf Proof of Theorem \ref{metrisability in terms of singleton selections}}

By Lemma \ref{S{fin}(K,O) then it is Lindelof} if a space satisfies $\S_1(\mc K,\mc O)$ then it is Lindel\"of. Hence by Theorem \ref{basic metrisability} a manifold which satisfies $\S_1(\mc K,\mc O)$ is metrisable. Thus each of the selection principles listed in Theorem \ref{metrisability in terms of singleton selections} implies metrisability for a manifold.

For the converse we use \cite[Proposition 5]{CDKM}, which states that the following two conditions are equivalent for a first 
countable space $X$:
\begin{itemize}
\item $X$ satisfies $\S_1(\mc K,\mc K)$;
\item $X$ is hemicompact.
\end{itemize}
\cite[Corollary 2.5]{GM} shows that a Hausdorff, locally compact, connected and locally metrisable space (hence a manifold) is metrisable if and only if it is hemicompact. Thus a metrisable manifold satisfies $\S_1(\mc K, \mc K)$, and hence $\S_1(\mc K,\Om)$, $\S_1(\mc K,\La)$ and $\S_1(\mc K,\mc O)$.\oti

\noindent{\bf Proof of Theorem \ref{metrisability in terms of finite selections}}

Lemma \ref{S{fin}(K,O) then it is Lindelof} and Theorem \ref{basic metrisability} tell us that $\S_{fin}(\mc K,\mc O)$, and hence each of the properties listed in Theorem \ref{metrisability in terms of finite selections}, implies metrisability for a manifold.

For the converse it suffices to prove that every metrisable manifold satisfies each of $\S_{fin}(\mc K,\mc K)$, $\S_{fin}(\Om,\Om)$, $\S_{fin}(\La,\La)$ and $\S_{fin}(\mc O,\mc O)$. Because $\S_1(\mc A,\mc B)\Rightarrow\S_{fin}(\mc A,\mc B)$ it follows from Theorem \ref{metrisability in terms of singleton selections} that a metrisable manifold satisfies $\S_{fin}(\mc K,\mc K)$.

We now show that if $M$ is metrisable then $M$ satisfies $\S_{fin}(\Om,\Om)$. Firstly we show that for any $m$, $\R^m$ satisfies $\S_{fin}(\Om,\Om)$. Suppose that $\la\mc U_n\ra$ is a sequence of $\om$-covers of $\R^m$. For each $n$, the collection $\{U^n\ /\ U\in\mc U_n\}$ is an open cover of $[-n,n]^{mn}$. Indeed, given $(\xi_1,\dots,\xi_n)\in[-n,n]^{mn}$, where each $\xi_i\in[-n,n]^m$, then $\{\xi_1,\dots,\xi_n\}$ is a finite subset of $\R^m$ so there is $U\in\mc U_n$ so that $\{\xi_1,\dots,\xi_n\}\subset U$. Then $(\xi_1,\dots,\xi_n)\in U^n$. Because $[-n,n]^{mn}$ is compact, there is a finite $\mc V_n\subset\mc U_n$ so that $\{V^n\ /\ V\in\mc V_n\}$ covers $[-n,n]^{mn}$. It is claimed that $\mc V=\cup_{n\in\om}\mc V_n$ is an $\om$-cover of $\R^m$. Suppose that $F\subset\R^m$ is a finite subset. Choose $n\in\om$ large enough that $|F|\le n$ and $F\subset[-n,n]^m$, and choose $\xi_1,\dots,\xi_n\in[-n,n]^m$ so that $F=\{\xi_1,\dots,\xi_n\}$ (repeats are allowed if $|F|<n$). Then $(\xi_1,\dots,\xi_n)\in V^n$ for some $V\in\mc V_n\subset\mc V$, so $F\subset V\in\mc V$ as required. By Proposition \ref{closed subspace of a space satisfying S(A,B) also satisfies S(A,B)} below, any closed subspace of $\R^m$ satisfies $\S_{fin}(\Om,\Om)$. It now follows from Theorem \ref{basic metrisability} that any metrisable manifold satisfies $\S_{fin}(\Om,\Om)$.

Next we show that a metrisable manifold $M$ satisfies $\S_{fin}(\La,\La)$. Again we prove it for $\R^m$. Given a sequence $\la\mc U_n\ra$ of large open covers of $\R^m$, choose finite $\mc V_n\subset\mc U_n$ to satisfy the following conditions: $\mc V_n$ covers $[-n,n]^m$ and no member of $\mc V_k$ for $k<n$ is in $\mc V_n$. Then $\cup_{n\in\N}\mc V_n$ is a large cover of $\R^m$.

Finally we show that every $\sigma$-compact space satisfies $\S_{fin}(\mc O,\mc O)$. Indeed,  let $\la\mc U_n\ra$ be a sequence of open covers of the $\sigma$-compact space $X=\cup_{n\in\N}K_n$, where each $K_n$ is compact. For each $n\in\N$, $\mc U_n$ is an open cover of $K_n$ so has a finite subcover, say $\mc V_n$. Then the sequence $\la\mc V_n\ra$ witnesses $\S_{fin}(\mc O,\mc O)$ for the sequence $\la\mc U_n\ra$, so by Theorem \ref{basic metrisability} every metrisable manifold satisfies $\S_{fin}(\mc O,\mc O)$. \oti
 
 \begin{prop}\label{closed subspace of a space satisfying S(A,B) also satisfies S(A,B)}
Suppose that $\mc A$ and $\mc B$ are any of $\mc O$, $\Om$, $\mc K$ and $\La$. Any closed subspace of a space satisfying $\S_1(\mc A,\mc B)$ (resp. $\S_{fin}(\mc A,\mc B)$) also satisfies $\S_1(\mc A,\mc B)$ (resp. $\S_{fin}(\mc A,\mc B)$). \end{prop}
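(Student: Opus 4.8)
The plan is to exhibit an extension operation carrying covers of the closed subspace up to the whole space, and a restriction operation carrying the resulting selection back down, and to check that these two operations are compatible with all four families. Let $C$ be closed in a space $X$ satisfying $\S_{fin}(\mc A,\mc B)$ (the $\S_1$ argument is identical, with single sets replacing finite ones), and write $P=X\sm C$, which is open. The device I would use is the map sending a set $U$ open in $C$ to $U^+=U\cup P$: this is open in $X$ (if $\tilde U\subset X$ is open with $\tilde U\cap C=U$, then $U^+=\tilde U\cup P$), it restricts back correctly since $U^+\cap C=U$, so $U\mapsto U^+$ is injective, and $U^+=X$ exactly when $U=C$.

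First I would verify that extension preserves each source family. Given an $\mc A$-cover $\mc U$ of $C$, put $\mc U^+=\{U^+\ /\ U\in\mc U\}$. If $\mc A=\mc O$ this is visibly an open cover of $X$, since every point of $P$ lies in every member. If $\mc A=\Om$ (resp. $\mc K$) and $F\subset X$ is finite (resp. compact), then $F\cap C$ is a finite (resp. compact) subset of $C$, so $F\cap C\subset U$ for some $U\in\mc U$, whence $F\subset U^+$; since $U\ne C$ forces $U^+\ne X$, we get $X\notin\mc U^+$ and $\mc U^+$ is an $\om$-cover (resp. $k$-cover) of $X$. If $\mc A=\La$, a point of $C$ lies in infinitely many $U$, hence in infinitely many $U^+$, while a point of $P$ lies in every member of $\mc U^+$, which is infinite because $\mc U$ is; together with $X\notin\mc U^+$ this makes $\mc U^+$ a large cover of $X$.

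Next I would verify that restriction preserves each target family in the reverse direction. Suppose $\mc W\subset\bigcup_n\mc U_n^+$ is a $\mc B$-cover of $X$, and set $\mc W|_C=\{W\cap C\ /\ W\in\mc W\}$, a family of sets open in $C$ each of which lies in some $\mc U_n$. The cases $\mc B=\mc O,\Om,\mc K$ are immediate from $C\subset X$: any finite or compact subset of $C$ is such a subset of $X$, hence lies in some $W\in\mc W$ and so in $W\cap C$, while $W\cap C=U\ne C$ gives $C\notin\mc W|_C$. The large-cover case is where the injectivity of $W\mapsto W\cap C$ on $\bigcup_n\mc U_n^+$ is needed: distinct members of $\mc W$ restrict to distinct members of $\mc W|_C$, so each $x\in C$ lying in infinitely many $W\in\mc W$ lies in infinitely many members of $\mc W|_C$.

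Finally I would assemble these. Given a sequence $\la\mc U_n\ra$ of $\mc A$-covers of $C$, each $\mc U_n^+$ is an $\mc A$-cover of $X$, so applying $\S_{fin}(\mc A,\mc B)$ (resp. $\S_1(\mc A,\mc B)$) in $X$ yields finite $\mc B_n^\ast\subset\mc U_n^+$ (resp. single $b_n^\ast\in\mc U_n^+$) with $\bigcup_n\mc B_n^\ast\in\mc B$ (resp. $\{b_n^\ast\ /\ n\in\om\}\in\mc B$); restricting to $C$ produces $\mc B_n=\{W\cap C\ /\ W\in\mc B_n^\ast\}\subset\mc U_n$ (resp. $b_n=b_n^\ast\cap C\in\mc U_n$) whose union (resp. whose range) lies in $\mc B$ by the restriction step, witnessing the principle for $\la\mc U_n\ra$. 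I expect the main obstacle to be choosing the extension correctly: adjoining only the single set $P$, or keeping only $\tilde U$, fails to convert an $\om$- or $k$-cover of $C$ into one of $X$ while keeping $X$ out of the family, and it is exactly the union $U\cup P$ together with the injectivity of $W\mapsto W\cap C$ that repairs both the $\om/k$ coverage and the large-cover counting.
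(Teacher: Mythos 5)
Your proof is correct and follows essentially the same route as the paper: your extension $U^+=U\cup P$ is exactly the paper's $\widehat{U}=U\cup(X\sm C)$, followed by applying the selection principle in $X$ and restricting the selection back to $C$. The only difference is one of detail: you explicitly verify that the extension preserves membership in $\mc O$, $\Om$, $\mc K$, $\La$ and that restriction (using injectivity of $W\mapsto W\cap C$ for the large-cover case) preserves the target class, steps the paper asserts without proof.
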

 \begin{proof}
 Suppose $X$ satisfies $\S_1(\mc A,\mc B)$ (resp. $\S_{fin}(\mc A,\mc B)$) and $C\subset X$ is a closed subspace. Suppose that $\la\mc U_n\ra$ is a sequence of covers of $C$ coming from $\mc A$. For each $U\subset C$ let $\widehat{U}=U\cup(X\sm C)$. Then $\widehat{\mc U_n}=\{\widehat{U}\ /\ U\in\mc U_n\}$ is a cover of $X$ coming from $\mc A$. Let $\widehat{\mc V_n}$ be a singleton (resp. finite) subfamily of $\widehat{\mc U_n}$ so that $\cup_{n\in\om}\widehat{\mc V_n}$ is a cover of $X$ coming from $\mc B$, obtained by applying $\S_1(\mc A,\mc B)$ (resp. $\S_{fin}(\mc A,\mc B)$). For each $n$ there is a singleton (resp. finite) family $\mc V_n\subset\mc U_n$ so that $\widehat{\mc V_n}=\{\widehat{V}\ /\ V\in\mc V_n\}$. Then $\cup_{n\in\om}\mc V_n$ is a cover of $C$ coming from $\mc B$.
 \end{proof}
 
 \begin{add}
 For any pair $(\mc A,\mc B)$ as in Theorem \ref{metrisability in terms of finite selections} the following are equivalent for a 
manifold $M$:
 \begin{itemize}
 \item $M$ is metrisable;
 \item $M$ satisfies $\S_{fin}(\mc A,\mc B)$;
 \item every $\mc A$ cover of $M$ has a countable subcover in $\mc B$.
 \end{itemize}
 \end{add}
 \begin{proof}
 All we need do is verify that the third property implies the first. However, checking the proofs of Lemma \ref{S{fin}(K,O) then it is Lindelof} and Theorems \ref{metrisability in terms of singleton selections} and \ref{metrisability in terms of finite selections} reveals that when we verified metrisability from a selection property we only applied the selection property to the case of a constant sequence of open covers. 
 \end{proof}

\section{Limits on the characterisations.}
 
From Theorems \ref{metrisability in terms of singleton selections} and \ref{metrisability in terms of finite selections} it is evident that each of the conditions in Figures \ref{Connections between singleton selection properties} and \ref{Connections between finite selection properties} when applied to a manifold implies metrisability of the manifold. In this section we show that none of these conditions not already seen to be equivalent to metrisability is implied by metrisability in the context of a manifold.
\begin{exam}\label{R does not satisfy S1(Omega,O) or S1(La,O)}
$\R$ does not satisfy $\S_1(\Om,\mc O)$ and hence metrisability of a manifold need not imply that the manifold satisfies either this property or any of $\S_1(\Om,\mc K)$, $\S_1(\Om,\Om)$, $\S_1(\Om,\La)$, $\S_1(\La,\mc K)$, $\S_1(\La,\Om)$,  $\S_1(\La,\La)$, $\S_1(\La,\mc O)$, $\S_1(\mc O,\mc K)$, $\S_1(\mc O,\Om)$, $\S_1(\mc O,\La)$ and $\S_1(\mc O,\mc O)$.
\end{exam}
Indeed, for each $n\in\N$ choose an open cover $\mc U_n$ of $\R$ as follows. For each finite $F\subset\R$, pick an open set $U_{F,n}\subset\R$ so that $F\subset U_{F,n}$ and $U_{F,n}$ has measure $\frac{1}{n^2}$. Set $\mc U_n=\{U_{F,n}\ /\ F\subset\R \mbox{ is finite}\}$. Then $\mc U_n\in\Omega$. However whatever $U_n\in\mc U_n$ we choose, $\{U_n\ /\ n\in\N\}$ cannot be an open cover of $\R$ as the measure of the union $\cup_{n\in\N}U_n$ is at most $\sum_{n\in\N}\frac{1}{n^2}=\frac{\pi^2}{6}$. The sequence $\la\mc U_n\ra$ exhibits the failure of $\S_1(\Omega,\mc O)$ on $\R$. \oti

\begin{exam}\label{R does not satisfy Sfin(La,Omega), Sfin(Omega,K) or Sfin(O,La)}
$\R$ does not satisfy the properties $\S_{fin}(\Om,\mc K)$, $\S_{fin}(\La,\Omega)$ or $\S_{fin}(\mc O,\La)$ and hence metrisability of a manifold need not imply that the manifold satisfies any of these properties nor $\S_{fin}(\mc O,\Omega)$, $\S_{fin}(\mc O,\mc K)$ and $\S_{fin}(\La,\mc K)$. \end{exam}
 Indeed, for each $n\in\N$ and each finite $F\subset\R$ let $U_{F,n}$ be an open set of length $\frac{1}{n}$ containing $F$ and set $\mc U_n=\{U_{F,n}\ /\ F\subset\R \mbox{ is finite}\}$. Then $\la\mc U_n\ra$ is a sequence of $\om$-covers of $\R$. No member of any $\mc U_n$ can contain a compact interval of length at least 1 so there is no finite selection from $\la\mc U_n\ra$ leading to a k-cover. Hence $\R$ does not satisfy $\S_{fin}(\Om,\mc K)$.
 
 Instead, for each $n$, let $\mc U_n$ consist of all open intervals of length $\frac{1}{n}$. Then $\mc U_n\in\La$. However, no member of any $\mc U_n$ contains $\{0,1\}$, so there can be no finite selection from $\la\mc U_n\ra$ giving rise to an $\om$-cover. Hence $\R$ does not satisfy $\S_{fin}(\La,\Omega)$..
 
 Finally, let $\mc U=\{(n,n+2)\ /\ n\in\Z\}$. Then $\mc U$ is an open cover of $\R$ but no finite selection from $\la\mc U\ra$ can lead to a large open cover of $\R$. \oti

\section{Other classes of open covers.}\label{Other classes of open covers}
 
In this section we consider four other classes of families of open sets. The first three are covers but the last is not.

\begin{itemize}
\item T, the family of \emph{$\tau$-covers} of $X$, ie those large open covers $\mc U$ such that for each $x,y\in X$ we have  either $x\in U$ implies $y\in U$ for all but finitely many $U\in\mc U$, or $y\in U$ implies $x\in U$ for all but finitely many $U\in\mc U$ (\cite{T99} and \cite{T03});
\item $\Gamma$, the family of \emph{$\ga$-covers} of $X$, ie those infinite open covers $\mc U$ for which $X\notin\mc U$ and  each point of $X$ belongs to all but finitely many members of $\mc U$;
\item $\Gamma_k$, the family of \emph{$\ga_k$-covers} of $X$, ie those infinite open covers $\mc U$ for which $X\notin\mc U$  and each compact $K\subset X$ belongs to all but finitely many members of $\mc U$;
\item $\mc D$, the family of collections of open subsets of $X$ whose union is dense in $X$ (\cite{BPS}).
\end{itemize}

Figure \ref{Connections between covering classes} shows how these classes are related to one another and to the classes 
introduced in Section \ref{introduction}.

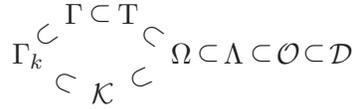
\begin{figure}[h]
\begin{center}
\begin{picture}(110,40)(0,0)
\put(0,15){$\Ga_k$}
\put(20,30){$\Ga$}
\put(40,30){T}
\put(30,0){$\mc K$}
\put(60,15){$\Om$}
\put(80,15){$\La$}
\put(100,15){$\mc O$}
\put(120,15){$\mc D$}

\put(29,31){$\subset$}
\put(70,16){$\subset$}
\put(90,16){$\subset$}
\put(110,16){$\subset$}
\put(11,21){\begin{rotate}{35}$\subset$\end{rotate}}
\put(48,26){\begin{rotate}{-35}$\subset$\end{rotate}}
\put(15,7){\begin{rotate}{-23}$\subset$\end{rotate}}
\put(46,5){\begin{rotate}{25}$\subset$\end{rotate}}
\end{picture}

\caption{\label{Connections between covering classes} Connections between covering classes.}

\end{center}
\end{figure}

We begin with $\Ga_k$, $\Ga$ and T. It follows from Theorems \ref{metrisability in terms of singleton selections} and \ref{metrisability in terms of finite selections} that each of the selection properties $\S_1(\mc K,\Ga_k)$, $\S_1(\mc K,\Ga)$, $\S_1(\mc K,{\rm T})$, $\S_{fin}(\mc K,\Ga_k)$, $\S_{fin}(\mc K,\Ga)$ and $\S_{fin}(\mc K,{\rm T})$ implies metrisability in the context of a manifold. The converse is also true. Indeed, \cite[Theorem 1]{M80} states that the following two conditions are equivalent for a T$_{3\frac{1}{2}}$ space $X$:
\begin{itemize}
\item $C_k(X)$ is a k-space;
\item every sequence of open covers for compact subsets of $X$ has a residual compact-covering string.
\end{itemize}
The meaning of the second condition is explained in \cite{M80} and it is exactly the same as $\S_1(\mc K,\Gamma_k)$. On the other hand, \cite[Theorem 1.2]{GM} shows that $C_k(M)$ is a k-space for a metrisable manifold $M$. Hence if a manifold is metrisable then it satisfies $\S_1(\mc K,\Gamma_k)$ and hence the other five selection principles $\S_1(\mc K,\Ga)$, $\S_1(\mc K,{\rm T})$, $\S_{fin}(\mc K,\Ga_k)$, $\S_{fin}(\mc K,\Ga)$ and $\S_{fin}(\mc K,{\rm T})$. Thus we have:

\begin{thm}
A manifold is metrisable if and only if it satisfies any one of $\S_1(\mc K,\Ga_k)$, $\S_1(\mc K,\Ga)$, $\S_1(\mc K,{\rm T})$, $\S_{fin}(\mc K,\Ga_k)$, $\S_{fin}(\mc K,\Ga)$ and $\S_{fin}(\mc K,{\rm T})$.
\end{thm}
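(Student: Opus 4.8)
The plan is to treat the two directions separately, leaning on the monotonicity of the selection operators together with the inclusions recorded in Figure \ref{Connections between covering classes}. For the implication that each of the six principles forces metrisability, I would observe that each target class $\mc B\in\{\Ga_k,\Ga,{\rm T}\}$ satisfies $\mc B\subset\mc O$: every $\ga_k$-cover, $\ga$-cover and $\tau$-cover is in particular an open cover. Hence, by monotonicity in the second argument, $\S_1(\mc K,\mc B)\Rightarrow\S_1(\mc K,\mc O)$ and $\S_{fin}(\mc K,\mc B)\Rightarrow\S_{fin}(\mc K,\mc O)$. Theorems \ref{metrisability in terms of singleton selections} and \ref{metrisability in terms of finite selections} then deliver metrisability in the context of a manifold, so this half requires no new work.

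For the converse---that a metrisable manifold satisfies all six principles---I would aim directly at the strongest, $\S_1(\mc K,\Ga_k)$, since from it the remaining five follow formally. The route is through function spaces. A manifold is Hausdorff and locally compact, hence Tychonoff, so the hypotheses of the cited results apply. First I would invoke \cite[Theorem 1.2]{GM} to conclude that $C_k(M)$ is a k-space for any metrisable manifold $M$, and then \cite[Theorem 1]{M80}, which characterises the k-space property of $C_k(X)$ (for $X$ a T$_{3\frac{1}{2}}$ space) by the existence, for every sequence of open covers of the compact subsets of $X$, of a residual compact-covering string. The decisive---and genuinely delicate---step is to recognise that this covering-string condition is verbatim the selection principle $\S_1(\mc K,\Ga_k)$: one must unwind the terminology of \cite{M80} and check that selecting a single set from each open cover so as to produce a family in which every compact set lies in all but finitely many members is precisely the demand that the selected sets form a $\ga_k$-cover. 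This dictionary is where I expect the work to lie; the topological content is supplied entirely by the two cited results, so the main obstacle is conceptual (matching the definitions) rather than computational.

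Once $\S_1(\mc K,\Ga_k)$ is in hand, I would close the argument by propagation. Because $\S_1(\mc A,\mc B)\Rightarrow\S_{fin}(\mc A,\mc B)$, the principle $\S_{fin}(\mc K,\Ga_k)$ follows immediately. From the inclusions $\Ga_k\subset\Ga\subset{\rm T}$ and monotonicity in the second argument, $\S_1(\mc K,\Ga_k)$ yields both $\S_1(\mc K,\Ga)$ and $\S_1(\mc K,{\rm T})$, and likewise $\S_{fin}(\mc K,\Ga_k)$ yields $\S_{fin}(\mc K,\Ga)$ and $\S_{fin}(\mc K,{\rm T})$. Thus all six principles hold for a metrisable manifold, and combined with the first paragraph this completes the equivalence.
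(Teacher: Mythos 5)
Your proposal is correct and follows essentially the same route as the paper: the forward direction via monotonicity in the second argument (each of $\Ga_k$, $\Ga$, ${\rm T}$ being contained in $\mc O$) together with Theorems \ref{metrisability in terms of singleton selections} and \ref{metrisability in terms of finite selections}, and the converse by combining \cite[Theorem 1.2]{GM} with \cite[Theorem 1]{M80} to obtain $\S_1(\mc K,\Ga_k)$ and then propagating to the other five principles. Your explicit remarks that a manifold is Tychonoff (so McCoy's hypotheses apply) and that the real work lies in matching McCoy's ``residual compact-covering string'' with $\S_1(\mc K,\Ga_k)$ are points the paper glosses over, but the argument is the same.
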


Examples \ref{R does not satisfy S1(Omega,O) or S1(La,O)} and \ref{R does not satisfy Sfin(La,Omega), Sfin(Omega,K) or Sfin(O,La)} assure us that metrisability of a manifold is insufficient to deduce any of $\S_1(\Om,\Ga_k)$, $\S_1(\Om,\Ga)$, $\S_1(\Om,{\rm T})$, $\S_{1}(\La,\Ga_k)$, $\S_{1}(\La,\Ga)$, $\S_1(\La,{\rm T})$, $\S_1(\mc O,\Ga_k)$, $\S_1(\mc O,\Ga)$, $\S_1(\mc O,{\rm T})$, $\S_{fin}(\Om,\Ga_k)$, $\S_{fin}(\mc O,\Ga_k)$, $\S_{fin}(\mc O,\Ga)$, $\S_{fin}(\La,\Ga)$ and $\S_{fin}(\La,\Ga_k)$, but not $\S_{fin}(\Om,\Ga)$ and $\S_{fin}(\Om,{\rm T})$. Thus we have the question. 

\begin{ques} Must a metrisable manifold satisfy $\S_{fin}(\Om,\Ga)$ or $\S_{fin}(\Om,{\rm T})$?\end{ques}

Because $\Ga_k\subset\mc K$ it follows that a metrisable manifold satisfies conditions of the form $\S_1(\Ga_k,\mc C)$ and $\S_{fin}(\Ga_k,\mc C)$. However the converse fails for a fairly trivial reason.

 \begin{exam}
There are no $\ga_k$-covers of the long line. Hence the long line satisfies $\S_1(\Ga_k,\mc C)$ and $\S_{fin}(\Ga_k,\mc C)$, where $\mc C$ represents any of the seven types of open covers we have discussed. The long line also satisfies $\S_{fin}(\Gamma,\La)$, hence $\S_{fin}(\Ga,\mc O)$, but is not metrisable.
 \end{exam}
First notice that if $\mc U$ is a $\ga$-cover of $\mathbb L$ then there is a finite $\mc F\subset\mc U$ such that  for each $U\in\mc U\sm\mc F$ there are $\al_U,\be_U\in\mathbb L$ such that $(-\om_1,\al_U)\cup(\be_U,\om_1)\subset U$. Indeed, this is saying that $\mathbb L\sm U$ is bounded (in addition to being closed) for all but finitely many $U\in\mc U$. If, to the contrary, there were infinitely many $U\in\mc U$ with $\mathbb L\sm U$ closed and unbounded, then we could choose countably infinitely many of them, say $U_n\in\mc U$. Then $\cap_{n=1}^\infty(\mathbb L\sm U_n)$ is unbounded, hence non-empty; choose $x\in\cap_{n=1}^\infty(\mathbb L\sm U_n)$. Then $x$ is not contained in any of the infinitely many members $\{U_n\ /\ n=1,\dots\}$ of $\mc U$, contrary to the requirement for $\mc U$ to be a $\ga$-cover.

Now suppose that $\mc U$ is a $\ga_k$-cover (hence a $\ga$-cover) of $\mathbb L$. Let $\mc F\subset\mc U$ and, for each $U\in\mc U\sm\mc F$, $\al_U$ be as in the previous paragraph. Extract from $\mc U\sm\mc F$ a countably infinite subset, say $\mc V$, and choose $\al\in\mathbb L$ such that $\al\ge\al_V$ for each $V\in\mc V$. The compactum $[-\al,\al]$ lies in all but finitely many members of $\mc V$. Thus we can find $U\in\mc V\subset\mc U$ such that $[-\al,\al]\subset U$. Since also $(-\om_1,\al_U)\cup(\be_U,\om_1)\subset U$, it follows that $U=\mathbb L$, contradicting a requirement for a cover to be a $\ga_k$-cover. Thus there can be no $\ga_k$-cover of $\mathbb L$.
 
We verify $\S_{fin}(\Gamma,\La)$. Inductively select finite $\mc V_n\subset\mc U_n$ and $\al_n\in\mathbb L$ so that some member of $\mc V_n$ contains $(-\om_1,-\al_n)\cup(\al_n,\om_1)$ and the remaining members of $\mc V_n$ cover $[-\al_n,\al_n]$ while $\mc V_m\cap\mc V_n=\nix$ when $m\not=n$. The induction proceeds as $\mc U_n\sm(\cup_{m<n}\mc V_m)$ is a $\ga$-cover. Then $\cup_{n=1}^\infty\mc V_n$ is a large cover of $\mathbb L$. \oti
 
 The reader might think that this example contradicts \cite[Figure 2]{JMSS} where it is suggested that $\S_{fin}(\mc O,\mc O)$ and $\S_{fin}(\Ga,\mc O)$ are equivalent; see also \cite[p. 245]{JMSS}. However at \cite[p. 242]{JMSS} the authors state that they assume all of their covers to be countable. Note that \cite[Prop. 11]{S96} makes the situation more explicit by proving that $\S_{fin}(\mc O,\mc O)$ and $\S_{fin}(\Ga,\mc O)$ are equivalent in a Lindel\"of space.
 
 Apart from showing that the long line satisfies $\S_{fin}(\Ga,\mc O)$ and $\S_{fin}(\Ga,\La)$ we have been unable to decide 
whether any of the conditions $\S_1(\Ga,\mc C)$, $\S_{fin}(\Ga,\mc C)$, $\S_1({\rm T},\mc C)$ or $\S_{fin}{\rm T},\mc C)$ is 
equivalent to metrisability of a manifold. We note in passing \cite[Corollary 6]{T03} which states that $\S_1({\rm T},\Ga)$ and $\S_{fin}({\rm T},\Ga)$ are equivalent. However this is somewhat misleading in our context as in \cite{T03} all spaces are assumed to be zero-dimensional subspaces of the reals.

\begin{ques} Are any of the conditions $\S_1(\Ga,\mc C)$ (for $\mc C=\Ga_k, \Ga, {\rm T}, \mc K, \Om, \La, \mc O$) equivalent to metrisability of a manifold?\end{ques}

\begin{ques} Are any of the conditions $\S_{fin}(\Ga,\mc C)$ (for $\mc C=\Ga_k, \Ga, {\rm T}, \mc K, \Om$) equivalent to metrisability of a manifold?\end{ques}

\begin{ques} Are any of the conditions $\S_1({\rm T},\mc C)$ (for $\mc C=\Ga_k, \Ga, {\rm T}, \mc K, \Om, \La, \mc O$) equivalent to metrisability of a manifold?\end{ques}
\begin{ques} Are any of the conditions $\S_{fin}({\rm T},\mc C)$ (for $\mc C=\Ga_k, \Ga, {\rm T}, \mc K, \Om, \La,\mc O$) equivalent to metrisability of a manifold?\end{ques}

Next we explore the class $\mc D$.

\begin{thm}\label{D and separability}
For a manifold $M$ the following conditions are equivalent.
\begin{enumerate}
\item $M$ is separable;
\item $M$ satisfies any one\footnote{hence all} of the selection conditions $\S_{1}(\mc K,\mc D)$, $\S_{1}(\Om,\mc D)$, $\S_{1}(\La,\mc D)$, $\S_{1}(\mc O,\mc D)$ and $\S_{1}(\mc D,\mc D)$;
\item $M$ satisfies any one\footnote{hence all} of the selection conditions $\S_{fin}(\mc K,\mc D)$, $\S_{fin}(\Om,\mc D)$, $\S_{fin}(\La,\mc D)$, $\S_{fin}(\mc O,\mc D)$ and $\S_{fin}(\mc D,\mc D)$;
\item every k-cover (resp. every $\om$-cover, large cover, open cover) of $M$ contains a countable subfamily whose union is dense in $M$;
\item every family of open subsets of $M$ whose union is dense in $M$ contains a countable subfamily whose union is dense in $M$.
\end{enumerate}
\end{thm}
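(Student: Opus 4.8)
The plan is to prove the whole theorem by routing every implication through condition (1), using the inclusions $\mc K\subset\Om\subset\La\subset\mc O\subset\mc D$ together with $\S_1(\mc A,\mc B)\Rightarrow\S_{fin}(\mc A,\mc B)$ and the monotonicity $\S_{fin}(\mc A,\mc B)\Rightarrow\S_{fin}(\mc C,\mc B)$ for $\mc C\subset\mc A$. These make $\S_1(\mc D,\mc D)$ the strongest of the ten conditions in (2)--(3) and $\S_{fin}(\mc K,\mc D)$ the weakest, with every other lying between; so to identify (2) and (3) with (1) it is enough to prove (1)$\Rightarrow\S_1(\mc D,\mc D)$ and $\S_{fin}(\mc K,\mc D)\Rightarrow$(1). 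Likewise, since enlarging the class of covers strengthens a statement of the shape ``every such cover has a countable subfamily with dense union'', among (4) and (5) the $\mc D$-statement (5) is the strongest and the $k$-cover clause of (4) the weakest; so for (4) and (5) it is enough to prove (1)$\Rightarrow$(5) and [the $k$-cover clause of (4)]$\Rightarrow$(1).

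First I would settle the implications out of (1). By Theorem \ref{basic separability} separability supplies a dense open $U\approx\R^m$, which is second countable; fix a countable base $\{B_k\ /\ k\in\om\}$ of nonempty open subsets of $U$. Since $U$ is dense in $M$, a family of open sets is dense in $M$ as soon as it meets every $B_k$. For (5), given $\mc U\in\mc D$ its union meets each $B_k$, so choosing $W_k\in\mc U$ with $W_k\cap B_k\not=\nix$ yields a countable subfamily $\{W_k\ /\ k\in\om\}$ with dense union. For $\S_1(\mc D,\mc D)$, given a sequence $\la\mc U_n\ra$ in $\mc D$, fix a surjection $\pi:\om\to\om$ and, using that $\bigcup\mc U_n$ meets $B_{\pi(n)}$, choose $b_n\in\mc U_n$ with $b_n\cap B_{\pi(n)}\not=\nix$; surjectivity of $\pi$ guarantees every $B_k$ is met, so $\{b_n\ /\ n\in\om\}\in\mc D$.

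For the reverse implications I would first dispose of the compact case (a compact manifold is trivially separable) and then take $\mc U$ to be the family of all open subsets of $M$ with compact closure. As $M$ is locally compact Hausdorff, each compact set is contained in a member of $\mc U$ and $M\notin\mc U$, so $\mc U\in\mc K$. Applying $\S_{fin}(\mc K,\mc D)$ to the constant sequence $\la\mc U\ra$ produces finite $\mc V_n\subset\mc U$ with $W=\bigcup_n\bigcup\mc V_n$ dense in $M$; the $k$-cover clause of (4) gives the same $W$ directly from $\mc U$. Now $W$ is a union of countably many relatively compact open sets, and the closure of each is covered by finitely many Euclidean charts and so is second countable; hence $W$ is second countable, therefore separable, and any countable dense subset of $W$ is dense in $M$. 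This yields both $\S_{fin}(\mc K,\mc D)\Rightarrow$(1) and [the $k$-cover clause of (4)]$\Rightarrow$(1), closing all the equivalences.

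The main obstacle is the step (1)$\Rightarrow\S_1(\mc D,\mc D)$: one is allowed to pick only a single set from each $\mc U_n$, and a naive single pass through $\la\mc U_n\ra$ need not reach every part of $M$. The resolution is Theorem \ref{basic separability}, which converts ``dense in $M$'' into the countable condition ``meets each $B_k$'' and thereby lets a surjection $\om\to\om$ spread the one-per-stage selections over all the $B_k$; without the dense Euclidean chart this diagonal bookkeeping would be unavailable.
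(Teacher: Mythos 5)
Your proof is correct, and although your global reduction coincides with the paper's---everything is routed through separability via the strongest condition $\S_1(\mc D,\mc D)$ and the weakest ones, namely $\S_{fin}(\mc K,\mc D)$ and the $k$-cover clause of (4)---both substantive implications are established by genuinely different arguments. For (1) $\Rightarrow\S_1(\mc D,\mc D)$ the paper intersects the dense open sets $O_n=\bigcup\mc U_n$ inside the dense chart $S\approx\R^m$, invokes the Baire category theorem to conclude that $\bigcap_{n\in\om}O_n$ is dense, extracts by hereditary separability a countable dense set $\{x_n\ /\ n\in\om\}\subset\bigcap_{n\in\om}O_n$, and then picks $U_n\in\mc U_n$ with $x_n\in U_n$; your argument replaces Baire category by pure bookkeeping, fixing a countable base $\{B_k\ /\ k\in\om\}$ of the chart and a surjection $\pi:\om\to\om$, and requiring only that the $n$-th selection meet $B_{\pi(n)}$. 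Your version is more elementary: it uses nothing about the chart beyond second countability and density, whereas the paper leans on the chart being Baire and hereditarily separable. In the reverse direction the paper covers $M$ by Euclidean charts and passes to $\widehat{\mc U}$, the family of finite unions of charts, asserting this is a $k$-cover; you instead use the family of all open sets with compact closure, after disposing of the compact case. Your variant is in fact tidier on an edge case the paper glosses over: a finite union of Euclidean charts can equal $M$ (for instance $M=\R^m$, or any compact manifold covered by finitely many charts), in which case $M\in\widehat{\mc U}$ and $\widehat{\mc U}$ fails the requirement $X\notin\mc U$ in the definition of a $k$-cover; relatively compact open sets avoid this difficulty for non-compact $M$, at the mild cost of treating compact $M$ separately. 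Both reverse arguments finish identically: a countable union of separable (second countable) open sets whose union is dense in $M$ supplies a countable dense subset of $M$.
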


\begin{proof}
In a general topological space $X$, $\S_{1}(\mc D,\mc D)$ implies each of the properties listed in (2)--(5) while each of the properties listed in (2)--(5) implies that every k-cover of $X$ contains a countable subfamily whose union is dense in $X$. Hence we need only show that separable manifolds satisfy $\S_{1}(\mc D,\mc D)$ and that the property involving k-covers implies separability for a manifold. We let $m$ be the dimension of $M$.

Suppose that $M$ is separable and that $\la\mc U_n\ra$ is a sequence of families of open subsets of $M$ such that for each $n$ the union of the members of $\mc U_n$, call it $O_n$, is dense in $M$. By Theorem \ref{basic separability} there is an open, dense subset $S\subset M$ which is homeomorphic to $\R^m$. Intersecting members of each $\mc U_n$ with $S$ if necessary, we may assume that each $O_n\subset S$. Because $S$ is Baire it follows that $\cap_{n\in\om}O_n$ is dense in $S$ and, because $S$ is hereditarily separable, $\cap_{n\in\om}O_n$ contains a countable subset, say $D$, which is still dense in $S$, hence $M$. Suppose that $D=\{x_n\ /\ n\in\om\}$. Of course each $\mc U_n$ covers $D$. For each $n\in\om$ choose $U_n\in\mc U_n$ such that $x_n\in U_n$. Then $\cup_{n\in\om}U_n$ contains $D$. Thus $M$ satisfies $\S_{1}(\mc D,\mc D)$.

Now suppose that every k-cover contains a countable subfamily whose union is dense in $M$ and that $\mc U$ is an open cover of $M$ by subsets homeomorphic to $\R^m$. Then 
$$\widehat{\mc U}=\{U_1\cup\dots\cup U_n\ /\ U_i\in\mc U \mbox{ for each $i$, and } n\in\N\}$$
is a k-cover of $M$ so by assumption $\widehat{\mc U}$ contains a countable subfamily whose union is dense in $M$ and since each member of $\widehat{\mc U}$ is a finite union of members of $\mc U$ we have a countable subfamily of $\mc U$ whose union is dense in $M$. Since each member of this subfamily has a countable dense subset, their union also contains a countable dense subset. Hence $M$ is separable.
\end{proof}

We cannot strengthen any of these properties by replacing $\mc D$ in the second position by any of the other classes. In fact no manifold satisfies $\S_{fin}(\mc D,\mc O)$. Indeed, given a manifold $M$ choose some $x\in M$ and then choose an open cover $\mc U$ of $M\sm\{x\}$, ie $x$ is not covered by $\mc U$. Then no selection from $\la\mc U\ra$ can yield an open cover of $M$.

\end{document}